\newcommand{\keywordsname}{Key words and phrases}
\newcommand{\keywords}[1]{%
\def\thekeywords{#1}%
\begin{@bstr@ctlist}
\hspace*{\abstitleskip}{\abstractnamefont\keywordsname\@bslabeldelim}\abstracttextfont\
#1%
\par\end{@bstr@ctlist}
}
\newcommand{\subjclassname}{Mathematics subject classification}
\newcommand{\subjclass}[2][2020]{%
\begin{@bstr@ctlist}
\hspace*{\abstitleskip}{\abstractnamefont\subjclassname\ (#1)\@bslabeldelim}\abstracttextfont\
#2%
\par\end{@bstr@ctlist}
}
\def\and{
	\end{tabular}%
	and%
	\begin{tabular}[t]{c}}%
\let\addresses\@empty      
\newcommand{\address}[2][]{\g@addto@macro\addresses{\address{#1}{#2}}}
\newcommand{\curraddr}[2][]{\g@addto@macro\addresses{\curraddr{#1}{#2}}}
\newcommand{\email}[2][]{\g@addto@macro\addresses{\email{#1}{#2}}}
\newcommand{\urladdr}[2][]{\g@addto@macro\addresses{\urladdr{#1}{#2}}}
\def\enddoc@text{
  \ifx\@empty\addresses \else\@setaddresses\fi}
\def\emailaddrname{E-mail address}
\def\@setaddresses{\par
  \nobreak \begingroup
%
%
  \interlinepenalty\@M
  \def\address##1##2{\begingroup%
    \par\addvspace\bigskipamount
    \@ifnotempty{##1}{(\ignorespaces##1\unskip) }%
    {\noindent\ignorespaces##2}\par\endgroup}%
%
%
  \def\email##1##2{\begingroup
    \@ifnotempty{##2}{\nobreak\noindent\emailaddrname
      \@ifnotempty{##1}{, \ignorespaces##1\unskip}\/:\space
      \ttfamily##2\par}\endgroup}%
%
%
  \addresses
  \endgroup
}
\def\cstar#1{\expandafter\@cstar\csname c@#1\endcsname}
\def\@cstar#1{\ifcase#1\or $\ast$\or $\ast\ast$\or $\ast\ast\ast$\fi}
\AddEnumerateCounter{\cstar}{\@cstar}{$\ast\ast\ast$}
\newlist{conditions}{enumerate}{1}
\newlist{exconditions}{enumerate}{1}
\newlist{iconditions}{enumerate}{1}
\newlist{inthm}{enumerate}{1}
\setlist[conditions]{label=\normalfont(\alph*),ref=\normalfont(\alph*)}
\setlist[exconditions]{label=\normalfont(\roman*),ref=\normalfont(\roman*),wide,labelindent=0pt}
\setlist[iconditions]{label=\normalfont(\roman*),ref=\normalfont(\roman*)}
\setlist[inthm]{label=\normalfont(\thetheorem.\arabic*),ref=\normalfont(\thetheorem.\arabic*),wide,labelindent=0pt}
\mathchardef\mhyphen="2D
\newcommand{\CB}{\mathbb{C}}
\newcommand{\G}{\mathbb{G}}
\newcommand{\PB}{\mathbb{P}}
\newcommand{\R}{\mathbb{R}}
\newcommand{\SB}{\mathbb{S}}
\newcommand{\Z}{\mathbb{Z}}
\newcommand{\C}{\mathcal{C}}
\newcommand{\UC}{\mathcal{U}}
\newcommand{\Cinfty}{\C^{\infty}}
\newcommand{\GL}{\mathrm{GL}}
\newcommand{\Halg}{H_{\mathrm{alg}}}
\newcommand{\Ker}{\operatorname{Ker}}
\newcommand{\SO}{\mathrm{SO}}
\newtheorem{theorem}{Theorem}[section]
\newtheorem{corollary}[theorem]{Corollary}
\newtheorem{proposition}[theorem]{Proposition}
\newtheorem{lemma}[theorem]{Lemma}
\theoremstyle{definition}
\newtheorem*{acknowledgements}{Acknowledgements}
\newtheorem{definition}[theorem]{Definition}
\newtheorem{example}[theorem]{Example}
\newtheorem{notation}[theorem]{Notation}
\newtheorem{problem}{Problem}
\newtheorem{remark}[theorem]{Remark}
\title{Rational approximation of holomorphic maps}
\date{}
\author{Jacek Bochnak \and Wojciech Kucharz}
\address{Jacek Bochnak\\Le Pont de l'\'Etang 8\\1323
Romainm\^otier\\Switzerland}
\email{jack3137@gmail.com}
\address{Wojciech Kucharz\\Institute of Mathematics\\Faculty of Mathematics and Computer
Science\\Jagiellonian University\\\L{}ojasiewicza 6\\30-348
Krak\'ow\\Poland}
\email{Wojciech.Kucharz@im.uj.edu.pl}
\begin{document}
\maketitle
\thispagestyle{empty}

\begin{abstract}
Let $X$ be a complex nonsingular affine algebraic variety, $K$ a compact holomorphically convex subset of $X$, and $Y$ a homogeneous variety for some complex linear algebraic group. We prove that a holomorphic map $f 
\colon K \to Y$ can be uniformly approximated on $K$ by regular maps $K \to Y$ if and only if $f$ is homotopic to a regular map $K \to Y$. However, it can happen that a null homotopic holomorphic map $K \to Y$ does not admit uniform approximation on $K$ by regular maps $X \to Y$. Here, a map $\varphi \colon K \to Y$ is called holomorphic (resp. regular) if there exist an open (resp. a Zariski open) neighborhood $U \subseteq X$ of $K$ and a holomorphic (resp. regular) map $\tilde \varphi \colon U \to Y$ such that $\tilde\varphi|_K = \varphi$.
\end{abstract}

\keywords{Algebraic manifold, holomorphic map, regular map, approximation.}
\hypersetup{pdfkeywords={\thekeywords}}
\subjclass{32Q56, 41A20, 14E05, 14M17.}

\section{Introduction}\label{sec:1}

Throughout this paper, \emph{algebraic varieties} are complex algebraic varieties understood in the sense of Serre \cite[p.~226]{bib25}. Each algebraic variety has an underlying structure of a complex space. Nonsingular algebraic varieties are complex (holomorphic) manifolds and will be called \emph{algebraic manifolds}. Morphisms of algebraic varieties will be called \emph{regular maps} (clearly, they are also holomorphic maps). Unless explicitly stated otherwise, all topological notions relating to algebraic varieties will refer to the Euclidean topology determined by the standard metric on $\CB$.

An algebraic variety $Y$ is said to be \emph{homogeneous} for an algebraic group $G$ if $G$ acts transitively on $Y$, the action $G \times Y \to Y$, $(a,y) \mapsto a \cdot y$ being a regular map. Note that each homogeneous algebraic variety is an equidimensional algebraic manifold. An algebraic group is said to be \emph{linear} if it is biregularly isomorphic to a Zariski closed subgroup of the general linear group $\GL_n(\CB)$, for some $n$.

Let $X$, $Y$ be algebraic varieties and let $K$ be a compact subset of $X$. A map $f \colon K \to Y$ is said to be \emph{holomorphic} (resp. \emph{regular}) if it is the restriction of a holomorphic (resp. regular) map $\tilde f \colon U \to Y$ defined on an open (resp. a Zariski open) neighborhood $U \subseteq X$ of $K$. We say that a holomorphic map $f \colon K \to Y$ can be \emph{approximated by regular maps from $K$ into $Y$} if for every neighborhood $
\UC \subseteq \C(K,Y)$ of $f$, where $\C(K,Y)$ is the space of all continuous maps endowed with the compact-open topology, there exists a regular map $K \to Y$ that belongs to $\UC$. The compact-open topology on $\C(K,Y)$ is the same as the uniform convergence topology with respect to a metric $d$ on $Y$ which induces the Euclidean topology on $Y$. Thus, a holomorphic map $f \colon K \to Y$ can be approximated by regular maps from $K$ into $Y$ if and only if for every $\varepsilon > 0$ there exists a \emph{rational} map $\varphi$ from $X$ into $Y$, with domain of definition containing $K$, such that $d(f(x), \varphi(x)) < \varepsilon$ for all $x \in K$.

Recall that a compact subset $K$ of a reduced complex space $Z$ is \emph{holomorphically convex} in $Z$ if for every point $p \in Z \setminus K$ there exists a holomorphic function $h \colon Z \to \CB$ with $|h(p)| > |h(x)|$ for all $x \in K$. In particular, every compact geometrically convex set in $\CB^n$ is holomorphically convex.

The main result of the present paper is the following.

\begin{theorem}\label{th-1-1}
Let $X$ be an affine algebraic manifold, $K$ a compact holomorphically convex set in $X$, and $Y$ a homogeneous algebraic manifold for some linear algebraic group. Then, for a holomorphic map $f \colon K \to Y$, the following conditions are equivalent:
\begin{conditions}
\item\label{th-1-1-a} $f$ can be approximated by regular maps from $K$ into $Y$.

\item\label{th-1-1-b} $f$ is homotopic to a regular map from $K$ into $Y$.
\end{conditions}
\end{theorem}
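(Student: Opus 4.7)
My plan is to treat the two implications separately, with \ref{th-1-1-b} $\Rightarrow$ \ref{th-1-1-a} carrying the main content. For \ref{th-1-1-a} $\Rightarrow$ \ref{th-1-1-b}, I would fix a metric $d$ on $Y$ inducing the Euclidean topology and choose $\delta > 0$ small enough that any two continuous maps $K \to Y$ within uniform distance $\delta$ are homotopic (using, say, a tubular neighborhood of the diagonal in $Y \times Y$ together with the fact that $Y$ is a manifold). A regular approximation $\varphi$ of $f$ with $\sup_K d(f, \varphi) < \delta$ is then homotopic to $f$.

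For \ref{th-1-1-b} $\Rightarrow$ \ref{th-1-1-a}, the idea is to leverage the transitive action $G \times Y \to Y$ to trade approximation of $Y$-valued maps for approximation of $G$-valued maps. I extend $f$ holomorphically to a map $\tilde f$ and $g$ regularly to a map $\tilde g$ on a common open neighborhood of $K$; shrinking and using holomorphic convexity of $K$, I may work on a Stein neighborhood $U$ of $K$. Form the holomorphic fiber bundle
\[
E = \{(x, b) \in U \times G : b \cdot \tilde g(x) = \tilde f(x)\} \to U,
\]
whose typical fiber is isomorphic to the isotropy subgroup $H$ and whose holomorphic sections are precisely the holomorphic maps $b \colon U \to G$ satisfying $b \cdot \tilde g = \tilde f$. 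The projection $U \times G \to U \times Y$, $(x, b) \mapsto (x, b \cdot \tilde g(x))$, is a locally trivial holomorphic fiber bundle, hence a topological fibration; lifting the homotopy from $g$ to $f$ along this fibration, starting with the identity at $t = 0$ where $f = g$, produces a continuous section of $E$ over $K$ that extends continuously to some neighborhood of $K$.

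I then invoke the Oka--Grauert principle over the Stein neighborhood $U$ to deform this continuous section into a holomorphic section on a smaller neighborhood $U' \supseteq K$, yielding a holomorphic $b \colon U' \to G$ with $b \cdot \tilde g = \tilde f$, hence $f = b|_K \cdot g$ on $K$. It remains to approximate $b|_K$ uniformly by regular maps $b_n \colon X \to G$, for then $x \mapsto b_n(x) \cdot g(x)$ is a regular $K \to Y$ approximation of $f$. Since $G$ is a connected linear algebraic group, $G$ is rational as an algebraic variety, so combining a birational parameterization of $G$ (applied on finitely many translates covering $b(K)$) with the Oka--Weil approximation of holomorphic functions on the holomorphically convex compactum $K$ by regular functions on the affine manifold $X$ produces the required $b_n$.

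The most delicate step will be the Oka--Grauert application, where a continuous section of $E$ must be deformed to a holomorphic one on a Stein neighborhood of $K$ via Grauert's classification of holomorphic principal bundles with complex Lie group as structure group over Stein spaces; the homotopy-lifting construction and the final approximation of $G$-valued maps are comparatively standard.
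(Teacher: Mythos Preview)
Your implication (a) $\Rightarrow$ (b) is fine and matches the paper. For (b) $\Rightarrow$ (a) you take a genuinely different route: rather than working directly on $Y$, you pass through $G$ by lifting the homotopy $g \sim f$ to a holomorphic $b \colon K \to G$ with $b \cdot g = f$ (via the fibration $U \times G \to U \times Y$ and Grauert's Oka principle), and then you propose to approximate $b$ by regular maps. The paper instead equips $Y$ itself with a \emph{dominating cascade} $s(y,v) = \varphi(v) \cdot y$ built from a birational chart $\varphi$ of the identity component $G^0$ at $e$ (Chevalley, Proposition~\ref{prop-2-10}), upgrades the given homotopy to one through holomorphic maps (Grauert, Lemma~\ref{lem-2-11}), subdivides $[0,1]$ so that each short piece lifts through $s$ to a holomorphic $\CB^n$-valued map (Lemmas~\ref{lem-2-5}--\ref{lem-2-6}), and approximates that lift by polynomials via Oka--Weil (Theorem~\ref{th-2-7}).

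The real gap is in your final step, which you call ``comparatively standard'' but which is in fact the heart of the matter. A birational parameterization of $G$ ``applied on finitely many translates covering $b(K)$'' together with Oka--Weil does \emph{not} yield a global regular approximation of $b$: on each piece $b^{-1}(g_i V)$ you obtain a local regular map close to $b$, but regular maps do not glue along overlaps, so these pieces cannot be assembled into a single regular map on a Zariski neighbourhood of $K$. (Separately, asking for $b_n \colon X \to G$ regular on all of $X$ is too strong---for $X = \CB^n$ and $G = \CB^\times$ every such map is constant---so only regular maps from $K$ are available.) What does work is to exploit the group law: factor $b = c_k \cdots c_1$ with each holomorphic $c_i \colon K \to G$ close to $e$, hence landing in a single rational chart at $e$; each $c_i$ then transports to a holomorphic map $K \to \CB^n$, is approximated by a regular map via Oka--Weil, and the product of the approximations is regular. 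Producing such a factorization requires a \emph{holomorphic} null-homotopy of $b$ (another use of Grauert) and a fine subdivision of $[0,1]$. Once you carry this out you have reproduced, for the special case $Y = G$, exactly the content of Lemmas~\ref{lem-2-5}--\ref{lem-2-6} and Theorem~\ref{th-2-7}; the detour through $G$ does not bypass the main analytic work, it only postpones it.
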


As an immediate consequence of Theorem \ref{th-1-1} we get the following.

\begin{corollary}\label{cor-1-2}
For $X$, $K$, $Y$ as in Theorem \ref{th-1-1}, every null homotopic holomorphic map from $K$ into $Y$ can be approximated by regular maps from $K$ into $Y$.
\end{corollary}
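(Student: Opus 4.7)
The plan is to derive Corollary \ref{cor-1-2} directly from Theorem \ref{th-1-1} by verifying that condition \ref{th-1-1-b} is automatically satisfied for null homotopic maps. Recall that a map $f \colon K \to Y$ is null homotopic precisely when it is homotopic to a constant map $c_{y_0} \colon K \to Y$, $x \mapsto y_0$, for some $y_0 \in Y$.

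First I would observe that any constant map is regular in the sense defined in the paper. Indeed, the assignment $\tilde c_{y_0} \colon X \to Y$, $x \mapsto y_0$, is a morphism of algebraic varieties (a constant map between varieties is always a regular map), so in particular it restricts to a regular map on $K$. Thus $c_{y_0} \colon K \to Y$ is regular, and the given null homotopic holomorphic map $f$ is homotopic to the regular map $c_{y_0}$, which is exactly condition \ref{th-1-1-b} of Theorem \ref{th-1-1}.

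Then, applying the implication \ref{th-1-1-b} $\Rightarrow$ \ref{th-1-1-a} of Theorem \ref{th-1-1}, we conclude that $f$ can be approximated by regular maps from $K$ into $Y$, which is the desired conclusion. There is no essential obstacle here: the statement is formally a one-line consequence of the theorem once one notes the triviality that constant maps are regular. The only point worth spelling out for the reader is precisely this latter observation, to make clear why a null homotopic map satisfies the homotopy hypothesis of the theorem.
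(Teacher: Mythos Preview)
Your proposal is correct and matches the paper's approach exactly: the paper simply states that Corollary~\ref{cor-1-2} is an immediate consequence of Theorem~\ref{th-1-1}, and your argument spells out precisely the one-line reason, namely that a constant map is regular so condition~\ref{th-1-1-b} is satisfied.
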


Since every continuous map defined on a geometrically convex subset of $\CB^n$ is null homotopic, we also get the following.

\begin{corollary}\label{cor-1-3}
Let $K$ be a compact geometrically convex set in $\CB^n$ and let $Y$ be a homogeneous algebraic manifold for some linear algebraic group. Then every holomorphic map from $K$ into $Y$ can be approximated by regular maps from $K$ into $Y$.
\end{corollary}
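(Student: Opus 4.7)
The plan is to deduce this as an immediate corollary of Corollary \ref{cor-1-2}, which is already available in the paper. I will verify that the hypotheses of that corollary are met in the present setting, and then appeal to it directly.

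First I would check the three input conditions required by Corollary \ref{cor-1-2}. The role of $X$ will be played by $\CB^n$ itself, which is an affine algebraic manifold, so the first hypothesis is trivial. The role of $K$ is unchanged; the required property is that $K$ be holomorphically convex in $\CB^n$, and this is precisely the assertion recorded in the paragraph preceding Theorem \ref{th-1-1}, namely that every compact geometrically convex set in $\CB^n$ is holomorphically convex. The homogeneity hypothesis on $Y$ is carried over verbatim.

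The only genuinely new ingredient is null homotopy of the given map $f \colon K \to Y$. Because $K$ is geometrically convex, it is contractible: fixing any $p_0 \in K$, the straight-line map $H \colon K \times [0,1] \to K$, $H(x,t) = (1-t)x + t p_0$, is a continuous contraction of $K$ to $p_0$, taking values in $K$ by convexity. Composing with $f$ yields a continuous homotopy from $f$ to the constant map with value $f(p_0)$, so $f$ is null homotopic in $\C(K,Y)$.

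With these three observations in hand, Corollary \ref{cor-1-2} applies and produces the desired approximation of $f$ by regular maps from $K$ into $Y$. There is no real obstacle here: the statement is a bookkeeping corollary whose substance was already absorbed into Theorem \ref{th-1-1} and Corollary \ref{cor-1-2}. The only point worth stressing in the write-up is that the notion of null homotopy in the paper is the continuous one, which is exactly what the straight-line contraction of $K$ delivers.
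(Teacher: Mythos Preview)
Your proposal is correct and follows essentially the same route as the paper: the paper deduces Corollary~\ref{cor-1-3} directly from Corollary~\ref{cor-1-2} by observing that a geometrically convex compact $K\subseteq\CB^n$ is holomorphically convex (stated just before Theorem~\ref{th-1-1}) and that every continuous map on such a $K$ is null homotopic. Your write-up simply makes the straight-line contraction explicit, which is fine.
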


The proof of Theorem \ref{th-1-1} is given in Section \ref{sec:2}, where we first develop technical tools among which is the concept of \emph{cascade} inspired by Gromov's key notation of \emph{spray} \cite{bib16}. Theorems \ref{th-2-7} and \ref{th-2-9} are of independent interest. In Example \ref{ex-1-5} we discuss relationships between our results and previous relevant results due to Forstneri\v{c} \cite{bib11,bib12}. Subsequently we give two other illustrative examples. In the remainder of this section we assume that Theorem \ref{th-1-1} holds.

Let $f \colon K \to \CB$ be a holomorphic function defined on a compact subset $K$ of $\CB$. By the Runge approximation theorem, for every $\varepsilon > 0$ there exists a rational function $\varphi$ on~$\CB$ without poles in $K$ such that $|f(x)-\varphi(x)|<\varepsilon$ for all $x \in K$. Now suppose that the compact set $K$
 is holomorphically convex in $\CB$ (equivalently, the set $\CB \setminus K$ is connected). Then, according to another variant of the Runge approximation theorem, for every $\varepsilon > 0$ there exists a regular (= polynomial) function $\psi \colon \CB \to \CB$ such that $|f(x)-\psi(x)|<\varepsilon$ for all $x \in K$.
 
 These two versions of the Runge approximation theorem suggest two different general problems. Given two algebraic manifolds $X,Y$ and a holomorphic map $f \colon K \to Y$ defined on a compact subset $K$ of $X$, consider the following.
 
 \begin{problem}\label{prob-i}
 Under what assumptions can $f$ be uniformly approximated on $K$ by regular maps from $K$ into $Y$?
 \end{problem}
 
 \begin{problem}\label{prob-ii}
 Under what assumptions can $f$ be uniformly approximated on $K$ by regular maps from $X$ into $Y$?
 \end{problem}
 
 The case where $Y = \CB$ (or, equivalently, $Y=\CB^n$) is classical, see the excellent recent survey by Fornaes, Forstneri\v{c} and Wald \cite{bib10} and the references therein. Some linearization methods are used if $Y$ is not a vector space. Assuming algebraic subellipticity of $Y$ (see \cite[Definition 2.1]{bib11} or \cite[Definition 5.6.13(e)]{bib12}), Forstneri\v{c} \cite{bib11,bib12,bib13}, L\'arusson and Truong \cite{bib23}, and Kusakabe \cite{bib22} obtained interesting results concerning Problem \ref{prob-ii}. On the other hand, \cite{bib5, bib21} and the present paper are contributions addressing Problem \ref{prob-i}. Suppose
now that $\dim X=1$ and $K$ is an arbitrary compact subset of $X$. The recent result of Benoist and
Wittenberg \cite[Theorem D]{bib2a} implies that approximation as in Problem \ref{prob-i} (resp. Problem~\ref{prob-ii}) is always
possible if $Y$ is a homogeneous space for some linear algebraic group (resp. a nonsingular
compactification of such a homogeneous space). Moreover, by \cite[Examples~5.4]{bib2a}, approximation
as in Problem \ref{prob-ii} is always possible if $Y$ is a nonsingular projective cubic hypersurface of dimension
at least $2$ (note that the analogous statement is false for some nonsingular quartic surfaces in
projective $3$-space). Thus, in particular, Theorem \ref{th-1-1} is of interest only for $\dim X \geq 2$.
 
 The following serves to fix some notation.
 
\begin{example}\label{ex-1-4}
Here are some homogeneous algebraic manifolds for linear algebraic groups.
\begin{exconditions}[widest=iii]
\item\label{ex-1-4-i} Every linear algebraic group $G$ is a homogeneous manifold for $G$ under action by left translations.

\item\label{ex-1-4-ii} If $G$ is a linear algebraic group and $H$ is a Zariski closed subgroup of $G$, then the quotient $G/H$ is a homogeneous algebraic manifold for $G$. Each homogeneous algebraic manifold for $G$ is, up to biregular isomorphism, of the form $G/H$.

\item\label{ex-1-4-iii} The Grassmannian $\G(k,n)$ of $k$-dimensional vector subspaces of $\CB^n$ is a homogeneous algebraic manifold for $\GL_n(\CB)$. In particular, complex projective $n$-space $\PB^n = \G(1,n+1)$ is homoegeneous for $\GL_{n+1}(\CB)$.

\item\label{ex-1-4-iv} For every nonnegative integer $n$ the complex unit $n$-sphere
\begin{equation*}
    \Sigma^n = \{(z_0, \ldots, z_n) \in \CB^{n+1} : z_0^2 + \cdots + z_n^2 = 1 \}
\end{equation*}
is a homogeneous algebraic manifold for the complex special orthogonal group $\SO_{n+1}(\CB)$. The set $\Sigma^n(\R) \coloneqq \Sigma^n \cap \R^{n+1}$ of real points of $\Sigma^n$ is the real unit $n$-sphere $\SB^n$ in $\R^{n+1}$. Note that $\SB^n$ is a deformation retract of $\Sigma^n$.
\end{exconditions}
\end{example}

Next we discuss relationships between Problem~\ref{prob-i} and Problem~\ref{prob-ii} in the context of Corollary~\ref{cor-1-3} and \cite[Corollary~6.15.2]{bib12}.

\begin{example}\label{ex-1-5}
Let $K$ be a compact geometrically convex set in $X \coloneqq \CB^n$ and let $Y$ be an algebraic manifold. By Corollary~\ref{cor-1-3}, if $Y$ is homogeneous for some linear algebraic group, then every holomorphic map form $K$ into $Y$ can be uniformly approximated on~$K$ by regular maps from $K$ into $Y$. On the other hand, by \cite[Corollary~6.15.2]{bib12}, if $Y$ is algebraically subelliptic, then every holomorphic map from $K$ into $Y$ can be uniformly approximated on $K$ by regular maps from $X$ into $Y$.

Suppose that $Y$ is homogeneous for a linear algebraic group $G$. Recall that a character of~$G$ is a homomorphism of algebraic groups $G \to \CB^{\times}$, where $\CB^{\times} \coloneqq \CB \setminus \{0\}$. If $G$ is connected and without nontrivial characters, then $Y$ is algebraically flexible \cite[Proposition~5.4]{bib1}, hence algebraically elliptic (therefore also algebraically subelliptic) \cite[Proposition~5.6.22(c)]{bib12}. Now, let $H$ be the isotropy group of a point $y \in Y$ and let us identify $Y$ with the quotient $G/H$. Assume that $G$ admits a nontrivial character $\chi \colon G \to \CB^{\times}$ with $\chi(H)=\{1\}$. Note that the regular map $\varphi \colon G/H \to \CB^{\times}$, defined by $\varphi(aH)=\chi(a)$ for all $a \in G$, is surjective. Therefore we can choose a holomorphic map $f \colon K \to G/H$ such that the composite map $\varphi \circ f \colon K \to \CB^{\times}$ is nonconstant. We claim that $f$ cannot be uniformly approximated on~$K$ by regular maps from $X$ into $G/H$. Indeed, supposing the claim false, we conclude that the nonconstant holomorphic map $\varphi \circ f \colon K \to \CB^{\times}$ can be uniformly approximated on $K$ by regular maps from $X$ into $\CB^{\times}$. This yields a contradiction because every regular map $X \to \CB^{\times}$ is constant. It follows that the homogeneous algebraic manifold $G/H$ is not algebraically subelliptic. Thus, Problems \ref{prob-i} and \ref{prob-ii} lead to quite different results, and the methods of \cite{bib11, bib12, bib13, bib16, bib20, bib22, bib23} based on algebraic subellipticity are not always applicable for maps into homogeneous algebraic manifolds.

As noted in Example~\ref{ex-1-4}\ref{ex-1-4-iv}, the complex $n$-sphere $\Sigma^n$ is homogeneous for the group $\SO_{n+1}(\CB)$, which is connected and has no nontrivial character. Consequently, $\Sigma^n$ is algebraically elliptic.

By \cite[Proposition~6.4.1(a)]{bib12}, if $A$ is an algebraic subset of $\CB^n$ of dimension at most ${n-2}$, then the complement $\CB^n \setminus A$ is algebraically elliptic. Such complements are homogeneous only in exceptional cases.

In conclusion, the notions of algebraic subellipticity and algebraic homogeneity are complementary: neither implies the other.
\end{example}

The next example requires some preparation. Let $X$ be a quasiprojective algebraic manifold. For any nonnegative integer $k$, a cohomology class in $H^{2k}(X; \Z)$ is said to be \emph{algebraic} if it corresponds via the cycle map to an algebraic cycle of codimension~$k$ on $X$, see \cite[Chapter~19]{bib13a}. The set $\Halg^{2k}(X;\Z)$ of all algebraic cohomology classes in $H^{2k}(X;\Z)$ forms a subgroup. The groups $\Halg^{2k}(-;\Z)$ have the expected functorial property: If $f \colon X \to Y$ is a regular map between quasiprojective algebraic manifolds, then
\begin{equation*}
    f^*(\Halg^{2k}(Y;\Z)) \subseteq \Halg^{2k}(X;\Z),
\end{equation*}
where $f^* \colon H^{2k}(Y; \Z) \to H^{2k}(X; \Z)$ is the homomorphism induced by $f$.

Now, fix a positive integer $k$. If $U$ is a Zariski open subset of $\CB^m$, then $\Halg^{2k}(U;\Z)=0$ (it is sufficient to note that each algebraic cycle on $U$ is the restriction of an algebraic cycle on $\CB^m$, and $H^{2k}(\CB^m;\Z) =0$). Therefore, given a regular map $\varphi \colon K \to Y$ from a compact subset $K$ of $\CB^m$ into a quasiprojective algebraic manifold $Y$, we get
\begin{equation*}
    \varphi^*(\Halg^{2k}(Y;\Z)) = 0 \quad \text{in } H^{2k}(K;\Z).
\end{equation*}
This is the case since $\varphi$ is the restriction of a regular map $\psi \colon U \to Y$ defined on a Zariski open neighborhood $U \subseteq \CB^m$ of $K$, and
\begin{equation*}
    \psi^*(\Halg^{2k}(Y;\Z)) \subseteq \Halg^{2k}(U;\Z) = 0,
\end{equation*}
where the inclusion holds by the functorial property of $\Halg^{2k}(-;\Z)$.

\begin{example}\label{ex-1-6}
Let $m,n,p$ be integers satisfying $1 \leq m \leq 2n-1 < p$ and let $r_j,R_j$ be real numbers with $0<r_j<R_j$ for $j=1,\ldots,m$. The annulus
\begin{equation*}
    K_j \coloneqq \{z \in \CB : r_j \leq |z| \leq R_j\}
\end{equation*}
is a compact holomorphically convex subset of $\CB^{\times} = \CB\setminus\{0\}$, and hence the Cartesian product $K \coloneqq K_1 \times \cdots \times K_m$ is a compact holomorphically convex subset of the $m$-fold product $X \coloneqq (\CB^{\times})^m$. Clearly, $X$ is an affine algebraic manifold. We claim that each regular map $\varphi \colon K \to \G(n,p)$ into the Grassmannian $\G(n,p)$ is null homotopic. The proof depends on some topological constructions. Let $U(n,p)$ denote the tautological vector bundle over $\G(n,p)$. To any continuous map $h \colon K \to \G(n,p)$ one can assign the pullback vector bundle $h^* U(n,p)$ over $K$. This gives rise to a map
\begin{equation*}
    \sigma \colon [K, \G(n,p)] \to \tilde K_{\CB}(K)
\end{equation*}
from the set $[K,\G(n,p)]$ of homotopy classes of continuous maps $K \to \G(n,p)$ into the group $\tilde K_{\CB}(K)$ of stable equivalence classes of topological $\CB$-vector bundles over $K$. Since $K$ has the homotopy type of the $m$-dimensional torus $(\SB^1)^m$, it follows from \cite[Chap.~8, Theorems 2.6 and 4.2]{bib19} that the map $\sigma$ is bijective (the inequalities $1 \leq m \leq m \leq 2n-1 <p$ are needed here). Now, the map $\varphi \colon K \to \G(n,p)$ being regular, in view of the discussion preceding Example~\ref{ex-1-6}, we get
\begin{equation*}
    \varphi^*(H^{2k}(\G(n,p));\Z)=0 \quad \text{in } H^{2k}(K;\Z)
\end{equation*}
for all positive integers $k$ (note that $\Halg^{2k}(\G(n,p);\Z) = H^{2k}(\G(n,p);\Z)$). Therefore, for the pullback vector bundle $\varphi^*U(n,p)$ over $K$, we get $c_k(\varphi^*U(n,p)) = \varphi^*(c_k(U(n,p)))=0$, where $c_k(-)$ stands for the $k$th Chern class. By \cite[\S2.5]{bib2}, the vector bundle $\varphi^*U(n,p)$ is topologically stably trivial. We conclude that the map $
\varphi$ is null homotopic, as claimed.

We know precisely the size of the set $[K, \G(n,p)]$ of homotopy classes. Indeed, $\sigma$ is a bijection and, by \cite[\S2.5]{bib2}, $\tilde K_{\CB}(K)$ is a free Abelian group of rank equal to the rank of the direct sum $\bigoplus_{k>0} H^{2k}(K;\Z)$. In particular, the set $[K, \G(n,p)]$ is infinite if $m \geq 2$. Finally, let us note that each continuous map $K \to \G(n,p)$ is homotopic to the restriction of a holomorphic map $X\to \G(n,p)$. This is the case since $K$ is a retract of $X$, and each continuous map $X \to \G(n,p)$ is homotopic to a holomorphic map by Grauert's theorem \cite{bib14} (see \cite{bib16} and \cite[Thoerem~5.4.4]{bib12} for more general results).
\end{example}

Given an affine (complex) algebraic variety $X$ defined over $\R$, we denote by $X(\R)$ the set of real points of $X$. Note that each compact subset of $X(\R)$ is holomorphically convex in $X$. Indeed, we may assume that $X$ is an algebraic subset of $\CB^m$, for some $m$, defined by polynomial equations with real coefficients. Then $X(\R) = X \cap \R^m$ is an algebraic subset of $\R^m$. It is well-known that every compact subset of $\R^m$ is holomorphically convex in~$\CB^m$. Consequently, each compact subset of $X(\R)$ is holomorphically convex in $\CB^m$, hence it is also holomorphically convex in $X$.

Let $X$ be an affine (complex) algebraic manifold defined over $\R$, $K$ a compact subset of $X(\R)$, and $Y$ an algebraic manifold. We may regard both $X(\R)$ and $Y$ as real analytic manifolds. Clearly, a map $f \colon K \to Y$ is holomorphic if and only if there exists a real analytic map $\varphi \colon W \to Y$ defined on an open neighborhood $W \subseteq X(\R)$ of $K$ such that $\varphi|_K=f$. By \cite{bib15}, $Y$ admits a real analytic embedding in some real Euclidean space (this is straightforward and does not require \cite{bib15} if $Y$ is quasiprojective). Hence, according to \cite[Theorem~2]{bib26}, each continuous map from $W$ into $Y$ can be uniformly approximated on $K$ by real analytic maps from $W$ into $Y$. Consequently, each continuous map from $K$ into $Y$ can be uniformly approximated on $K$ by holomorphic maps from $K$ into $Y$.

In our last example below we refer to real algebraic sets and real regular maps (see \cite{bib3} for a detailed treatment of these notions).

\begin{example}\label{ex-1-7}
Let $M$ be a compact connected $\Cinfty$ manifold of dimension $n$. We assert that there exists an affine (complex) algebraic manifold $X$ defined over $\R$ such that its real part $K=X(\R)$ is diffeomorphic to $M$ and every continuous map from $K$ into the complex unit $n$-sphere $\Sigma^n$ can be approximated by (complex) regular maps from $K$ into~$\Sigma^n$. This can be proved as follows. By \cite[Proposition~4.5]{bib4}, there exists a nonsingular real algebraic set $K$ in $\R^m$, for some $m$, such that $K$ is diffeomorphic to $M$ and each continuous map from $K$ into $\SB^n$ is homotopic to a (real) regular map from $K$ into $\SB^n$. Let $Z$ be the Zariski closure of $K$ in $\CB^m$. The singular locus $S$ of $Z$ is an algebraic subset of $\CB^m$ defined by polynomial equations $P_1=0, \ldots, P_k=0$, where the polynomials $P_i$ have real coefficients. Setting $P \coloneqq P_1^2 + \cdots + P_k^2$, we get
\begin{equation*}
    S(\R) = \{x\in \R^m : P(x) = 0\}.
\end{equation*}
By construction, $K=Z(\R)$ is disjoint from $S(\R)$, and hence
\begin{equation*}
    X \coloneqq Z \setminus \{x \in \CB^m : P(x)=0\}
\end{equation*}
is an affine (complex) algebraic manifold defined over $\R$, with $X(\R) = K$. Let $f \colon K \to \Sigma^n$ be a continuous map. Our goal is to prove that $f$ can be uniformly approximated on~$K$ by regular maps. Let $j \colon \SB^n \hookrightarrow \Sigma^n$ be the inclusion map and let $\rho \colon \Sigma^n \to \SB^n$ be a deformation retraction. The composite $\rho \circ f \colon K \to \SB^n$ is homotopic to a (real) regular map $h \colon K \to \SB^n$. Setting $g = j \circ h$, we have $\rho \circ g = h$, and hence the maps $\rho \circ f, \rho \circ g \colon K \to \SB^n$ are homotopic. Consequently, the maps $f,g \colon K \to \Sigma^n$ are also homotopic. Since $f$~is uniformly approximable by holomorphic maps from $K$ into $\Sigma^n$, we may assume that $f$~itself is a holomorphic map. In view of Theorem~\ref{th-1-1} the proof is complete.
\end{example}

Regular maps are in general too rigid for approximation of holomorphic maps. Nash maps, which form an intermediate class between regular and holomorphic maps, are more flexible. Demailly, Lempert and Shiffman \cite{bib9} and Lempert \cite{bib24} proved that a holomorphic map from a Runge domain in an affine variety into a quasiprojective variety can be uniformly approximated on compact sets by Nash maps.

\section{Sections of amenable submersions}\label{sec:2}
In this section we work with vector bundles which are always either holomorphic or algebraic vector bundles. Let $Y$ be a complex (holomorphic) manifold or an algebraic manifold. Given a vector bundle $p \colon E \to Y$ over $Y$, with total space $E$ and bundle projection $p$, we may refer to $E$ as a vector bundle over $Y$. If $y \in Y$, we let $E_y \coloneqq p^{-1}(y)$ denote the fiber of $E$ over $y$ and write $0_y$ for the zero vector in $E_y$. The set $Z(E) = \{0_y : y \in Y \}$ is called the zero section of $E$. In particular, if $E = Y \times \CB^n$ is the product vector bundle over $Y$, then $E_y = \{y\} \times \CB^n$, $0_y = (y,0)$, and $Z(E) = Y \times \{0\}$ (here $0$ is the zero vector in $\CB^n$). We write $TY$ for the tangent bundle to $Y$ and $T_yY$ for the tangent space to $Y$ at $y \in Y$.

Next we introduce some notations and definitions, and prove three technical lemmas.

\begin{notation}\label{not-2-1}
Let $X, Z$ be two algebraic manifolds and let $h \colon Z \to X$ be a regular map which is a surjective submersion. Furthermore, let $V(h)$ denote the algebraic vector subbundle of the tangent bundle $TZ$ to $Z$ defined by
\begin{equation*}
    V(h)_z = \Ker (d_zh \colon T_zZ \to T_{h(z)}X) \quad \text{for all } z \in Z,
\end{equation*}
where $d_zh$ is the derivative of $h$ at $z$. Clearly, $V(h)_z$ is the tangent space to the fiber $h^{-1}(h(z))$.
\end{notation}

For any subset $A$ of $X$, a map $\alpha \colon A \to Z$ is called a \emph{section} of $h \colon Z \to X$ if $h(\alpha(x)) = x$ for all $x \in A$. Given an open subset $U$ of $X$, we call a continuous map ${F \colon U \times [0,1] \to Z}$ a~\emph{homotopy of holomorphic sections} if for every $t \in [0,1]$ the map $F_t \colon U \to Z$, ${x \mapsto F(x,t)}$ is a holomorphic section. Now, let $K$ be a compact subset of $X$. A section ${f \colon K \to Z}$ is said to be \emph{holomorphic} (resp. \emph{regular}) if it is the restriction of a holomorphic (resp. regular) section $\tilde f \colon U \to Z$ defined on an open (resp. Zariski open) neighborhood $U \subseteq X$ of $K$. We say that a holomorphic section $f \colon K \to Z$ can be \emph{approximated by regular sections defined on $K$} if for every neighborhood $\UC$ of $f$ in the space $\C(K,Z)$ of all continuous maps there exists a regular section $\varphi \colon K \to Z$ that belongs to $\UC$. Two holomorphic sections $f_0, f_1 \colon K \to Z$ are said to be \emph{homotopic through holomorphic sections} if there exist an open neighborhood $U \subseteq X$ of $K$ and a homotopy of holomorphic sections $F \colon U \times [0,1] \to Z$ such that $F_0|_K = f_0$ and $F_1|_K = f_1$.\goodbreak

\begin{definition}\label{def-2-2}
Let $h \colon Z \to X$ be the submersion of Notation \ref{not-2-1}.
\begin{iconditions}
\item\label{def-2-2-i} A \emph{cascade} for $h \colon Z \to X$ is a triple $(E,E^0,s)$, where $E = Z \times \CB^n$ is the product vector bundle over $Z$, for some $n$, and $s \colon E^0 \to Z$ is a regular map defined on a Zariski open neighborhood $E^0 \subseteq E$ of the zero section $Z(E) = Z \times \{0\}$ of $E$ such that
\begin{equation*}
    s(E_z \cap E^0) \subseteq h^{-1}(h(z)) \quad\text{and}\quad s(z,0) = z \quad\text{for all } z \in Z.
\end{equation*}

\item\label{def-2-2-ii} A cascade $(E, E^0, s)$ for $h \colon Z \to X$ is said to be \emph{dominating} if the derivative
\begin{equation*}
    d_{(z,0)}s \colon T_{(z,0)}E \to T_zZ
\end{equation*}
maps the subspace $E_z = T_{(z,0)}E_z$ of $T_{(z,0)}E$ onto $V(h)_z$, that is,
\begin{equation*}
    d_{(z,0)}s(E_z) = V(h)_z
\end{equation*}
for all $z\in Z$.

\item\label{def-2-2-iii} The submersion $h \colon Z \to X$ is called \emph{amenable} if it admits a dominating cascade.
\end{iconditions}
\end{definition}

\begin{notation}\label{not-2-3}
Suppose that $(E=Z \times \CB^n, E^0, s)$ is a dominating cascade for the submersion $h \colon Z \to X$. Let $f \colon U \to Z$ be a holomorphic section of $h \colon Z \to X$ defined on some open subset $U$ of $X$. Let $E_f = U \times \CB^n$ be the product vector bundle over $U$ and define
\begin{align*}
    &E_f^0 \coloneqq \{(x,v) \in U \times \CB^n : (f(x), v) \in E^0 \}\\
    &s_f \colon E_f^0 \to Z, \quad s_f(x,v) \coloneqq s(f(x),v).
\end{align*}
Clearly, $E_f^0 \subseteq E_f$ is an open neighborhood of the zero section $Z(E_f) = U \times \{0\}$ of $E_f$, and $s_f$ is a holomorphic map.
\end{notation}

\begin{lemma}\label{lem-2-4}
With Notation~\ref{not-2-3}, assume that the open subset $U$ of $X$ is Stein. Then there exists a holomorphic vector subbundle $\tilde E_f$ of $E_f$ having the following property: If $\tilde E_f^0 \coloneqq \tilde E \cap E_f^0$ and $\tilde s_f \colon \tilde E_f^0 \to Z$ is the restriction of $s_f$, then $\tilde s_f$ maps biholomorphically an open neighborhood of the zero section $Z(\tilde E_f) = U \times \{0\}$ in $\tilde E_f^0$ onto an open neighborhood of $f(U)$ in $Z$.
\end{lemma}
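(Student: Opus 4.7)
My plan is to realize $\tilde E_f$ as a holomorphic splitting of a natural surjection of vector bundles on the Stein manifold $U$, and then to deduce that $\tilde s_f$ is a local biholomorphism along the zero section which extends to a genuine biholomorphism on a tubular neighborhood thanks to the fibre-preserving identity $h \circ \tilde s_f(x,v) = x$.

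First, I would build a surjective holomorphic bundle homomorphism on $U$. Restricting $s$ to the fibre $E_{f(x)} \cap E^0$ and taking its derivative at $(f(x),0)$ gives, for each $x\in U$, a linear map $\Phi_x \colon \CB^n \to V(h)_{f(x)}$, which is surjective by the dominating property in Definition~\ref{def-2-2}\ref{def-2-2-ii}. Since $f$ is holomorphic and $s$ is regular, the $\Phi_x$ assemble into a surjective holomorphic bundle homomorphism $\Phi \colon E_f \to f^*V(h)$ of constant rank, whose kernel is a holomorphic vector subbundle of $E_f$.

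Next, since $U$ is Stein, Cartan's Theorem~B yields vanishing of the relevant first cohomology, so the short exact sequence
\begin{equation*}
0 \to \Ker \Phi \to E_f \to f^*V(h) \to 0
\end{equation*}
of holomorphic vector bundles splits; I would let $\tilde E_f \subseteq E_f$ be the image of a holomorphic splitting, so that $\Phi$ restricts on $\tilde E_f$ to a bundle isomorphism onto $f^*V(h)$. At a point $(x,0)$ of the zero section one has $\tilde s_f(x,0) = f(x)$, hence the horizontal component of $d_{(x,0)}\tilde s_f$ is $d_xf$, whose image is a complement of $V(h)_{f(x)}$ in $T_{f(x)}Z$ (because $h\circ f = \mathrm{id}_U$ forces $d_xh \circ d_xf = \mathrm{id}$); the vertical component is $\Phi_x$ restricted to the fibre $(\tilde E_f)_x$, which is an isomorphism onto $V(h)_{f(x)}$ by construction. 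Therefore $d_{(x,0)}\tilde s_f$ is an isomorphism onto $T_{f(x)}Z$, and $\tilde s_f$ is a local biholomorphism at every point of $Z(\tilde E_f)$.

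It remains to promote this to a biholomorphism on a neighborhood of $Z(\tilde E_f)$. The identity $h \circ \tilde s_f(x,v) = x$ from Definition~\ref{def-2-2}\ref{def-2-2-i} forces $\tilde s_f$ to respect the projection to $U$, so two points with the same image under $\tilde s_f$ lie in the same fibre of $\tilde E_f \to U$; global injectivity thereby reduces to fibrewise injectivity near the zero vector. The inverse function theorem with holomorphic parameters then yields a continuous choice of injectivity radii $\varepsilon \colon U \to (0,\infty)$ such that, for any Hermitian metric on $\tilde E_f$, the tube $W \coloneqq \{(x,v) \in \tilde E_f^0 : \|v\|_x < \varepsilon(x)\}$ is an open neighborhood of $Z(\tilde E_f)$ on which $\tilde s_f$ is both injective and a local biholomorphism; hence $\tilde s_f|_W$ is a biholomorphism onto the open neighborhood $\tilde s_f(W)$ of $f(U)$ in $Z$. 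The main technical point is this last step---extending pointwise local biholomorphism to global injectivity along the possibly non-compact zero section---and the fibre-preserving cascade axiom is precisely what reduces it to the parametric inverse function theorem.
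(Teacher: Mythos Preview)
Your argument is correct and matches the paper's proof almost step for step: you build $\tilde E_f$ as a holomorphic complement of the kernel bundle $\Ker\Phi$ using Cartan's Theorem~B on the Stein manifold $U$, and you verify that $d_{(x,0)}\tilde s_f$ is an isomorphism via the same horizontal/vertical decomposition (the paper is terser here, but the content is identical). The one place you diverge is the final step. The paper simply invokes \cite[(12.7)]{bib7}---the general differential-topology fact that a smooth map which embeds a submanifold and has invertible derivative along it restricts to a diffeomorphism on some neighborhood of that submanifold---whereas you use the cascade axiom $h\circ\tilde s_f(x,v)=x$ to reduce global injectivity to fibrewise injectivity and then appeal to a continuous choice of local injectivity radii. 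Both routes are valid; yours is more self-contained and makes explicit the role of the fibre-preserving structure in handling the possibly non-compact zero section, while the paper's citation dispatches this point in one line at the cost of an external reference.
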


\begin{proof}
For an arbitrary point $x \in U$ the zero vector in the fiber $(E_f)_x = \{x\} \times \CB^n$ is $(x,0)$. Moreover, the derivative
\begin{equation*}
    d_{(x,0)}s_f \colon T_{(x,0)}E_f \to T_{f(x)}Z
\end{equation*}
induces a surjective linear map
\begin{equation*}
    \varphi_x \colon (E_f)_x \to V(h)_{f(x)}
\end{equation*}
(we regard $(E_f)_x = T_{(x,0)}(E_f)_x$ as a vector subspace of $T_{(x,0)}E_f$). Note that the union $K \coloneqq \bigcup_{x\in U} \Ker \varphi_x$ is a holomorphic vector subbundle of $E_f$. Since $U$ is a Stein open subset of $X$, it follows from Cartan's theorem B that $E_f$ can be expressed as a direct sum $E_f = \tilde E_f \oplus K$ for some holomorphic vector subbundle $\tilde E_f$ of $E_f$, see \cite[Corollary~2.6.6]{bib12}. Set $\tilde E_f^0 \coloneqq \tilde E_f \cap E_f^0$ and let $\tilde s_f \colon \tilde E_f^0 \to Z$ be the restriction of $s_f \colon E_f \to Z$. We have $\tilde s_f(x,0) = s(f(x), 0)=f(x)$ for all $x \in U$, hence $\tilde s_f$ induces a biholomorphism between $Z(\tilde E_f) = U \times \{0\}$ and $f(U)$. Moreover, by construction, the derivative
\begin{equation*}
    d_{(x,0)} \tilde s_f \colon T_{(x,0)} \tilde E_f \to T_{f(x)} Z
\end{equation*}
is an isomorphism for all $x \in U$. Consequently, $\tilde s_f$ is a local biholomorphism at each point $(x,0)$. Therefore the lemma follows from \cite[(12.7)]{bib7}.
\end{proof}

\begin{lemma}\label{lem-2-5}
Suppose that $(E=Z \times \CB^n, E^0, s)$ is a dominating cascade for the submersion $h \colon Z \to X$. Let $U$ be an open Stein subset of $X$ and let $F \colon U \times [0,1] \to Z$ be a homotopy of holomorphic sections of $h \colon Z \to X$. Let $U_0$ be an open subset of $X$ whose closure $\overline U_0$ is compact and contained in $U$. Let $t_0 \in [0,1]$. Then there exist a neighborhood $I_0$ of $t_0$ in~$[0,1]$ and a continuous map $\eta \colon U_0 \times I_0 \to \CB^n$ such that
\begin{inthm}[widest=2.5.4]
\item\label{lem-2-5-1} $(F(x,t_0), \eta(x,t)) \in E^0$ for all $(x,t) \in U_0 \times I_0$,

\item\label{lem-2-5-2} $\eta(x,t_0) = 0$ for all $x\in U_0$,

\item\label{lem-2-5-3} $s(F(x,t_0), \eta(x,t)) = F(x,t)$ for all $(x,t) \in  U_0 \times I_0$,

\item\label{lem-2-5-4} for every $t \in I_0$ the map $U_0 \to \CB^n$, $x \mapsto \eta(x,t)$ is holomorphic.
\end{inthm}
\end{lemma}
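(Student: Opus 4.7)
The plan is to apply Lemma~\ref{lem-2-4} to the holomorphic section $f := F_{t_0}\colon U \to Z$ of $h$, and then pull the homotopy $F$ back through the resulting local biholomorphism to extract $\eta$. Lemma~\ref{lem-2-4} produces a holomorphic subbundle $\tilde E_{F_{t_0}}$ of $E_{F_{t_0}} = U \times \CB^n$ together with an open neighborhood $\Omega$ of the zero section $U \times \{0\}$ in $\tilde E_{F_{t_0}}^0$ such that $\tilde s_{F_{t_0}}$ restricts to a biholomorphism $\Omega \to V$ onto an open neighborhood $V$ of $F_{t_0}(U)$ in $Z$.

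The key observation is that because $F_{t_0}$ is a section of $h$, the cascade axiom yields
\begin{equation*}
    \tilde s_{F_{t_0}}(x, v) = s(F_{t_0}(x), v) \in h^{-1}(h(F_{t_0}(x))) = h^{-1}(x),
\end{equation*}
so $h \circ \tilde s_{F_{t_0}}$ equals the projection $U \times \CB^n \to U$. Consequently the inverse $(\tilde s_{F_{t_0}}|_\Omega)^{-1}\colon V \to \Omega$ factors as $z \mapsto (h(z), \xi(z))$ for a uniquely determined holomorphic map $\xi\colon V \to \CB^n$. Since $\overline U_0$ is compact and $F_{t_0}(\overline U_0) \subseteq V$, continuity of $F$ together with an elementary tube-neighborhood argument furnishes a neighborhood $I_0$ of $t_0$ in $[0,1]$ with $F(\overline U_0 \times I_0) \subseteq V$. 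I then define
\begin{equation*}
    \eta\colon U_0 \times I_0 \to \CB^n, \qquad \eta(x,t) := \xi(F(x,t)),
\end{equation*}
which is continuous as a composition of continuous maps.

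The verification of the four conditions is then essentially bookkeeping. For \ref{lem-2-5-1}, the point $(x, \eta(x,t))$ lies in $\Omega \subseteq \tilde E_{F_{t_0}}^0 \subseteq E_{F_{t_0}}^0$, which by definition of $E_f^0$ means precisely $(F(x,t_0), \eta(x,t)) \in E^0$. For \ref{lem-2-5-2}, since $\tilde s_{F_{t_0}}(x,0) = F_{t_0}(x) = F(x,t_0)$ and $\tilde s_{F_{t_0}}|_\Omega$ is injective, one gets $\eta(x,t_0) = 0$. Condition \ref{lem-2-5-3} is immediate: $s(F(x,t_0), \eta(x,t)) = \tilde s_{F_{t_0}}(x, \xi(F(x,t))) = F(x,t)$. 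Condition \ref{lem-2-5-4} follows because, for fixed $t$, the slice $x \mapsto \xi(F_t(x))$ is the composition of the holomorphic section $F_t$ with $\xi$.

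The only real piece of work beyond invoking Lemma~\ref{lem-2-4} is the identification $h \circ \tilde s_{F_{t_0}} = \mathrm{pr}_U$, which is what licenses viewing the inverse of $\tilde s_{F_{t_0}}|_\Omega$ as a holomorphic $\CB^n$-valued map $\xi$ on an open subset of $Z$ (rather than merely as a section of some bundle). Everything else reduces to the biholomorphism from Lemma~\ref{lem-2-4}, continuity of $F$, and compactness of $\overline U_0$; I do not expect any genuine obstacle.
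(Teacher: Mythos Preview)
Your proposal is correct and follows essentially the same route as the paper's own proof: apply Lemma~\ref{lem-2-4} to $f=F_{t_0}$, use compactness of $\overline U_0$ and continuity of $F$ to trap $F(U_0\times I_0)$ in the image of the local biholomorphism, and pull back via its inverse. The only cosmetic difference is that you front-load the identity $h\circ\tilde s_{F_{t_0}}=\mathrm{pr}_U$ to write the inverse directly as $z\mapsto(h(z),\xi(z))$, whereas the paper first writes the inverse as $\xi_t(x)=(\alpha_t(x),\eta_t(x))$ and then checks $\alpha_t(x)=x$ a posteriori from the same cascade axiom.
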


\begin{proof}
Define a holomorphic section $f \colon U \to Z$ by $f(x) = F(x,t_0)$. By Lemma \ref{lem-2-4}, there exists a holomorphic subbundle $\tilde E_f$ of $E_f$ such that if $\tilde E_f^0 = \tilde E_f \cap E^0$ and $\tilde s_f \colon \tilde E_f^0 \to Z$ is the restriction of $s_f$, then $\tilde s_f$ maps biholomorphically an open neighborhood $M \subseteq \tilde E_f^0$ of the zero section $Z(\tilde E_f) = U\times \{0\}$ onto an open neighborhood $N \subseteq Z$ of $f(U)$. Let $\sigma \colon M \to N$ be the restriction of $\tilde s_f$. Since $\overline U_0$ is a compact subset of $U$, we can choose an open neighborhood $I_0$ of $t_0$ in $[0,1]$ such that $F_t(U_0) \subseteq N$ for all $t \in I_0$. Therefore, for every $t \in I_0$, there exists a unique holomorphic map $\xi_t \colon U_0 \to M$ satisfying $F_t(x) = \sigma(\xi_t(x))$ for all $x \in U_0$. Writing $\xi_t(x)$ as $\xi_t(x) = (\alpha_t(x), \eta_t(x))$, where $\alpha_t \colon U_0 \to U$ and $\eta_t \colon U_0 \to \CB^n$ are holomorphic maps, for all $(x,t) \in U_0 \times I_0$ we get
\begin{equation*}
    F_t(x) = \tilde s_f(\alpha_t(x), \eta_t(x)) = s(f(\alpha_t(x)), \eta_t(x)),
\end{equation*}
hence also
\begin{equation*}
    x=h(F_t(x)) = h(s(f(\alpha_t(x)), \eta_t(x))) = \alpha_t(x),
\end{equation*}
where the last equality follows from Definition \ref{def-2-2}\ref{def-2-2-i}. Consequently, $\alpha_t(x) = x$ for all $(x,t) \in U_0 \times I_0$. By construction, $\eta \colon U_0 \times I_0 \to \CB^n$, $(x,t) \mapsto \eta_t(x)$ is a continuous map and conditions \ref{lem-2-5-1}--\ref{lem-2-5-4} hold.
\end{proof}

\begin{lemma}\label{lem-2-6}
Suppose that the submersion $h \colon Z \to X$ is amenable. Let $U$ be an open Stein subset of $X$ and let $F \colon U \times [0,1] \to Z$ be a homotopy of holomorphic sections of $h \colon Z \to X$. Let $U_0$ be an open subset of $X$ whose closure $\overline U_0$ is compact and contained in $U$. Then there exists a dominating cascade $(E = Z \times \CB^m, E^0, s)$ for $h \colon Z \to X$ and a continuous map $\eta \colon U_0 \times [0,1] \to \CB^m$ such that
\begin{inthm}
\item\label{lem-2-6-1} $(F(x,0),\eta(x,t)) \in E^0$ for all $(x,t) \in U_0 \times [0,1]$,

\item\label{lem-2-6-2} $\eta(x,0) = 0$ for all $x \in U_0$,

\item\label{lem-2-6-3} $s(F(x,0),\eta(x,t)) = F(x,t)$ for all $(x,t) 
\in U_0 \times [0,1]$,

\item\label{lem-2-6-4} for every $t \in [0,1]$ the map $U_0 \to \CB^m$, $x \mapsto \eta(x,t)$ is holomorphic.
\end{inthm}
\end{lemma}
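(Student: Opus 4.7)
The plan is to reduce the construction of $\eta$ to repeated application of Lemma \ref{lem-2-5} and to build the required cascade by iterating the given dominating cascade. The main technical obstacle is that Lemma \ref{lem-2-5} only expresses $F_t$ in terms of $F_{t_0}$ on a neighborhood of a single base point $t_0$, so we cannot cover all of $[0,1]$ with one application; enlarging the fiber and iterating the given cascade will let the local pieces be glued together.

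Since $h$ is amenable, fix a dominating cascade $(E_0 = Z \times \CB^n, E_0^0, s_0)$ for $h$. For each positive integer $N$, form the iterated map
\[
s_N \colon E_N^0 \to Z, \qquad s_N(z, v_1, \ldots, v_N) = s_0\bigl(\cdots s_0(s_0(z, v_1), v_2) \cdots, v_N\bigr),
\]
on the Zariski open neighborhood $E_N^0 \subseteq Z \times \CB^{Nn}$ of the zero section consisting of those tuples for which each successive evaluation is legal. Each step of $s_0$ preserves the fiber of $h$ through $z$ and fixes $z$ at $v = 0$, so by induction $s_N$ preserves fibers, satisfies $s_N(z, 0, \ldots, 0) = z$, and its partial derivative at $(z, 0, \ldots, 0)$ in the $v_1$-direction agrees with $d_{(z,0)}s_0|_{E_z}$, which surjects onto $V(h)_z$. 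Hence $(E_N, E_N^0, s_N)$ is again a dominating cascade for $h$.

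Next, use compactness to obtain a chain of base points. Applying Lemma \ref{lem-2-5} at each $t_0 \in [0,1]$ yields an open neighborhood $J(t_0) \subseteq [0,1]$ of $t_0$; using continuity of $F$ on the compact set $\overline{U}_0 \times [0,1]$ together with compactness of $[0,1]$, one may choose a partition $0 = \tau_0 < \tau_1 < \cdots < \tau_N = 1$ such that, writing $J_k := J(\tau_k)$ for the neighborhood produced by Lemma \ref{lem-2-5} at base point $\tau_k$, the inclusion $[\tau_k, \tau_{k+1}] \subseteq J_k$ holds for every $k$. For each such $k$ one obtains a continuous map $\eta^{(k)} \colon U_0 \times J_k \to \CB^n$, holomorphic in $x$ for each fixed $t$, with $\eta^{(k)}(x, \tau_k) = 0$ and $s_0(F(x, \tau_k), \eta^{(k)}(x, t)) = F(x, t)$.

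Finally, splice these pieces into a single map $\eta \colon U_0 \times [0, 1] \to \CB^{Nn}$ by setting, for $t \in [\tau_k, \tau_{k+1}]$,
\[
\eta(x, t) = \bigl(\eta^{(0)}(x, \tau_1),\, \ldots,\, \eta^{(k-1)}(x, \tau_k),\, \eta^{(k)}(x, t),\, 0,\, \ldots,\, 0\bigr) \in \CB^{Nn}.
\]
At each seam $t = \tau_{k+1}$ both prescriptions agree because $\eta^{(k+1)}(x, \tau_{k+1}) = 0$, so $\eta$ is continuous, and at $t = 0$ every entry vanishes. A short induction on $k$, combining the identities $s_0(F(x, \tau_j), \eta^{(j)}(x, \tau_{j+1})) = F(x, \tau_{j+1})$ and $s_0(F(x, t), 0) = F(x, t)$, yields $s_N(F(x, 0), \eta(x, t)) = F(x, t)$, which is \ref{lem-2-6-3}; conclusions \ref{lem-2-6-1}, \ref{lem-2-6-2}, and \ref{lem-2-6-4} follow at once from the corresponding parts of Lemma \ref{lem-2-5}, with the containment $Z \times \{0\} \subseteq E_0^0$ handling the trailing zeros appearing in $\eta(x, t)$.
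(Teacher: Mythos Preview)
Your proof is correct and follows essentially the same approach as the paper: iterate the given dominating cascade to obtain $(E_N,E_N^0,s_N)$, use Lemma~\ref{lem-2-5} together with compactness of $[0,1]$ to produce a partition and local lifts $\eta^{(k)}$, and then splice these into a single $\eta$ with trailing zeros. Your splicing formula $\bigl(\eta^{(0)}(x,\tau_1),\ldots,\eta^{(k-1)}(x,\tau_k),\eta^{(k)}(x,t),0,\ldots,0\bigr)$ is in fact written more carefully than the paper's version, which contains a small typo in the frozen arguments.
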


\begin{proof}
Let $(\tilde E= Z \times \CB^n, \tilde E^0, \tilde s)$ be a dominating cascade for the submersion $h\colon Z \to X$. In view of Lemma \ref{lem-2-5} and the compactness of the interval $[0,1]$ (see the Lebesgue lemma for compact metric spaces \cite[p.~28, Lemma~9.11]{bib6}), there exists a partition $0 = t_0 < t_1 < \cdots < t_k = 1$ of $[0,1]$ such that for $i=1,\ldots,k$ there exists a continuous map $\eta^i \colon U_0 \times [t_{i-1},t_i] \to \CB^n$ with the following properties:
\begin{itemize}
\begin{samepage}
    \item $F(x,t_{i-1}), \eta^i(x,t)) \in \tilde E^0$ for all $(x,t) \in U_0 \times [t_{i-1},t_1]$,
    \item $\eta^i(x,t_{i-1})=0$ for all $x \in U_0$,
\end{samepage}
    \item $\tilde s(F(x,t_{i-1}), \eta^i(x,t)) = F(x,t)$ for all $(x,t) \in U_0 \times [t_{i-1},t_i]$,
    \item for every $t \in [t_{i-1},t_i]$ the map $U_0 \to \CB^n$, $x \mapsto \eta^i(x,t)$ is holomorphic.
\end{itemize}
For $i=1,\ldots,k$ we define recursively a dominating cascade $(E(i), E(i)^0, s^{(i)})$  by
\begin{equation*}
    (E(i), E(i)^0, s^{(i)}) \coloneqq (\tilde E, \tilde E^0, \tilde s) \quad \text{if } i=1,
\end{equation*}
while for $i \geq 2$ we set
\begin{align*}
    &E(i) \coloneqq Z \times (\CB^n)^{(i)}\\
    &E(i)^0 \coloneqq \begin{aligned}[t]
    \{(z,v_1, \ldots, v_i) \in E(i) :
    (&z, v_1, \ldots, v_{i-1}) \in E(i-1)^0, \\ 
    (&s^{i-1}(z, v_1, \ldots, v_{i-1}), v_i) \in E(1)^0\},
    \end{aligned}\\
    &s^{(i)} \colon E(i)^0 \to Z, \quad s^{(i)}(z, v_1, \ldots, v_i) \coloneqq s^{(1)}(s^{(i-1)}(z, v_1, \ldots, v_{i-1}),v_i),
\end{align*}
where $z$ is in $Z$ and $v_1, \ldots, v_i$ are in $\CB^n$.

In particular, $(E, E^0, s) \coloneqq (E(k), E(k)^0, s^{(k)})$ is a dominating cascade for $h \colon Z \to X$. By construction, $E = Z \times \CB^m$, where $\CB^m = (\CB^n)^k$ is the $k$-fold product of $\CB^n$. Now, consider a map $\eta \colon U_0 \times [0,1] \to \CB^m = (\CB^n)^k$ defined by
\begin{equation*}
    \eta(x,t) \coloneqq (\eta^1(x,t), 0, \ldots, 0)
\end{equation*}
for all $(x,t) \in U_0 \times [t_0, t_1]$, and
\begin{equation*}
    \eta(x,t) \coloneqq (\eta^1(x,t), \ldots, \eta^{i-1}(x,t), \eta^i(x,t), 0, \ldots, 0)
\end{equation*}
for all $(x,t) \in U_0 \times [t_{i-1},t_i]$ with $i=2,\ldots,k$. One readily checks that $\eta$ is a well-defined continuous map satisfying \ref{lem-2-6-1}--\ref{lem-2-6-4}.
\end{proof}

We have the following result on approximation of holomorphic sections by regular sections.

\begin{theorem}\label{th-2-7}
Let $h \colon Z \to X$ be an amenable regular submersion from an algebraic manifold $Z$ onto an affine algebraic manifold $X$. Let $K$ be a compact holomorphically convex subset of $X$ and let $f \colon K \to Z$ be a holomorphic section of $h \colon Z \to X$ that is homotopic through holomorphic sections to a regular section defined on $K$. Then $f$ can be approximated by regular sections defined over $K$.
\end{theorem}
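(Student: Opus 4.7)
The strategy is to use the dominating cascade to reduce the approximation of the section $f$ into $Z$ to the approximation of an auxiliary holomorphic map from a neighborhood of $K$ into $\CB^m$ by a regular one --- a classical Runge-type question on affine algebraic manifolds.

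By hypothesis there is a regular section $g$ of $h$ defined on some Zariski open neighborhood $V \supseteq K$, together with a homotopy of holomorphic sections $F \colon U \times [0,1] \to Z$ on some open neighborhood $U$ of $K$ satisfying $F_0|_K = g|_K$ and $F_1|_K = f$. Since $X$ is affine and nonsingular, hence Stein, and $K$ is holomorphically convex in $X$, the compactum $K$ admits a fundamental system of Stein open neighborhoods, so I would shrink $U$ to be Stein and contained in $V$, and pick an open $U_0$ with $K \subseteq U_0$ and $\overline{U_0}$ compact in $U$. Applying Lemma~\ref{lem-2-6} to this data produces a dominating cascade $(E = Z \times \CB^m, E^0, s)$ for $h$ and a continuous map $\eta \colon U_0 \times [0,1] \to \CB^m$ with $s(F_0(x), \eta(x,t)) = F(x,t)$ and $\eta(\cdot, t)$ holomorphic for every $t$. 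Setting $\beta \coloneqq \eta(\cdot, 1)$, at $t=1$ I obtain $s(F_0(x), \beta(x)) = f(x)$ on $U_0$; since $F_0 = g$ on $K$ this gives $s(g(x), \beta(x)) = f(x)$ for every $x \in K$, and in particular $(g(x), \beta(x)) \in E^0$ there.

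The next step is to approximate the holomorphic map $\beta \colon U_0 \to \CB^m$ componentwise by regular $\CB^m$-valued maps on $K$. This is a classical Runge-type theorem on affine algebraic manifolds: since $K$ is holomorphically convex in the affine manifold $X$, a closed embedding $X \hookrightarrow \CB^N$ makes $K$ polynomially convex in $\CB^N$, and the Oka--Weil theorem together with density of polynomials yields a regular map $\tilde\beta$, defined on a Zariski open neighborhood of $K$, which is uniformly arbitrarily close to $\beta$ on $K$.

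Finally, set $\tilde f(x) \coloneqq s(g(x), \tilde\beta(x))$. Because $E^0$ is Zariski open in $Z \times \CB^m$ and $(g(x), \beta(x)) \in E^0$ for all $x \in K$, for $\tilde\beta$ sufficiently close to $\beta$ on $K$ the regular map $x \mapsto (g(x), \tilde\beta(x))$ sends a Zariski open neighborhood of $K$ into $E^0$; hence $\tilde f$ is a regular map on that neighborhood. The cascade property $s(E_z \cap E^0) \subseteq h^{-1}(h(z))$ gives $h(\tilde f(x)) = h(g(x)) = x$, so $\tilde f$ is a regular section of $h$ defined on a Zariski open neighborhood of $K$, and continuity of $s$ on the compact set $\{(g(x), \beta(x)) : x \in K\}$ ensures that $\tilde f$ is uniformly close to $f = s(g(\cdot), \beta(\cdot))|_K$ on $K$. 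I expect the principal obstacle to be the Runge-type step: it is the only place where classical complex analysis enters essentially, and getting approximation by \emph{regular} rather than merely holomorphic maps relies in a genuine way on both the affine and the holomorphically convex hypotheses.
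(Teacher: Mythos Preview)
Your proposal is correct and follows essentially the same route as the paper's proof: apply Lemma~\ref{lem-2-6} to convert the homotopy into a holomorphic $\CB^m$-valued map $\eta_1$ satisfying $s(F_0(\cdot),\eta_1(\cdot))=f$, then approximate $\eta_1$ on $K$ by a polynomial map via the Oka--Weil theorem after embedding $X$ as a closed subvariety of some $\CB^N$, and compose with the regular data $g$ and $s$. The paper carries out the Runge step a bit more explicitly---it extends $\eta_1$ from a Stein neighborhood in $X$ to one in the ambient $\CB^N$ before invoking Oka--Weil---but this is precisely the mechanism you sketch, and your observation that the preimage of the Zariski open set $E^0$ under the regular map $x\mapsto(g(x),\tilde\beta(x))$ is itself Zariski open containing $K$ is the right way to see that $\tilde f$ is a regular section.
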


\begin{proof}
By assumption, there exist an open neighborhood $U \subseteq X$ of $K$ and a homotopy $F \colon U \times [0,1] \to Z$ of holomorphic sections of $h \colon Z \to X$ such that $F_0|_K$ is a regular section and $F_1|_K=f$. Shrinking $U$ if necessary, we may assume that $U$ is Stein. Choose an open subset $U_0$ of $X$ such that its closure $\overline U_0$ is compact and $K \subseteq U_0 \subseteq \overline U_0 \subseteq U$. Let $(E= Z \times \CB^m, E^0, s)$ and $\eta \colon U_0 \times [0,1] \to \CB^m$ be as in Lemma~\ref{lem-2-6}. In particular, by~\ref{lem-2-6-3}, we get
\begin{equation*}
    s(F_0(x),\eta_1(x)) = F_1(x) \quad \text{for all } x \in U_0,
\end{equation*}
where $\eta_1 \colon U_0 \to \CB^m$, $\eta_1(x) = \eta(x,1)$.

Since $X$ is an affine algebraic manifold, we may assume that $X$ is an algebraic subset of~$\CB^N$. Hence, by \cite[p.~245, Theorem~18]{bib17}, every holomorphic function on $X$ is the restriction of a holomorphic function on $\CB^N$. It follows that the compact set $K$ is polynomially convex in $\CB^N$, being holomorphically convex in $X$. Now, represent $U_0$ as $U_0 = W \cap X$, where $W$ is an open neighborhood of $K$ in $\CB^N$. Choose a Stein open neighborhood~$W_1$ of $K$ in $W$. Then the intersection $U_1 \coloneqq U_0 \cap W_1$ is a closed complex submanifold of~$W_1$. By \cite[p.~245, Theorem~18]{bib17} once again, the holomorphic map ${\eta_1|_{U_1} \colon U_1 \to \CB^m}$ has a holomorphic extension $W_1 \to \CB^m$. Therefore, according to the Oka--Weil theorem \cite[Theorem~2.7.7]{bib18}, the map $\eta_1|_K \colon K \to \CB^m$ can be uniformly approximated on $K$ by polynomial maps $\CB^N \to \CB^m$. If $\beta \colon \CB^N \to \CB^m$ is a polynomial map with $\beta|_K$ sufficiently close to $\eta_1|_K$, then $(F_0(x), \beta(x)) \in E^0$ for all $x \in K$, and
\begin{equation*}
    \varphi \colon K \to Z, \quad x \mapsto s(F_0(x), \beta(x))
\end{equation*}
is a regular map close to $f$. By Definition~\ref{def-2-2}\ref{def-2-2-i}, $\varphi$ is a section of $h \colon Z \to X$, which completes the proof.
\end{proof}

Our next task is to derive from Theorem~\ref{th-2-7} a result on approximation of holomorphic maps. To this end the following is useful.

\begin{definition}\label{def-2-8}
Let $Y$ be an algebraic manifold.
\begin{iconditions}
\item\label{def-2-8-i} A \emph{cascade} for $Y$ is a triple $(E,E^0,s)$, where $E = Y \times \CB^n$ is the product vector bundle over $Y$, for some $n$, and $s \colon E^0 \to Y$ is a regular map defined on a Zariski open neighborhood $E^0 \subseteq E$ of the zero section $Z(E) = Y \times \{0\}$ of $E$ such that
$s(y,0)=y$ for all $y \in Y$.

\item\label{def-2-8-ii} A cascade $(E, E^0, s)$ for $Y$ is said to be \emph{dominating} if the derivative
\begin{equation*}
    d_{(y,0)}s \colon T_{(y,0)}E \to T_yY
\end{equation*}
maps the subspace $E_y = T_{(y,0)}E_y$ of $T_{(y,0)}E$ onto $T_yY$, that is,
\begin{equation*}
    d_{(y,0)}s(E_z) = T_yY \quad \text{for all } y \in Y.
\end{equation*}

\item\label{def-2-8-iii} The algebraic manifold $Y$ is called \emph{amenable} if it admits a dominating cascade.
\end{iconditions}
\end{definition}

Note that Definition~\ref{def-2-8} is a special case of Definition~\ref{def-2-2} for the trivial submersion $h \colon Y \to X$, where $X$ is reduced to a single point. A cascade (resp. dominating cascade) $(E,E^0,s)$ for $Y$ with $E^0=E$ is just what has been called in the literature an \emph{algebraic spray} (resp. \emph{algebraic dominating spray}) for $Y$ on the product vector bundle $E$ \cite{bib11, bib12, bib13, bib16, bib20, bib22, bib23}.

Let $X$ and $Y$ be complex (holomorphic) manifolds. Given a compact subset $K$ of $X$, we say that two holomorphic maps $f_0, f_1 \colon K \to Y$ are \emph{homotopic through holomorphic maps} if there exist an open neighborhood $U \subseteq X$ of $K$ and a continuous map $F \colon U \times [0,1] \to Y$ such that for every $t \in [0,1]$ the map $F_t \colon U \to Y$, $x \mapsto F(x,t)$ is holomorphic and $F_0|_K=f_0$, $F_1|_K=f_1$.

\begin{theorem}\label{th-2-9}
Let $X$ be an affine algebraic manifold, $K$ a compact holomorphically convex subset of $X$, and $Y$ an amenable algebraic manifold. Let $f \colon K \to Y$ be a holomorphic map that is homotopic through holomorphic maps to a regular map from $K$ into $Y$. Then $f$ can be approximated by regular maps from $K$ into $Y$.
\end{theorem}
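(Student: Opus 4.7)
The plan is to reduce Theorem~\ref{th-2-9} to Theorem~\ref{th-2-7} by regarding maps $X \to Y$ as sections of the trivial projection $h \colon Z \to X$, where $Z \coloneqq X \times Y$ and $h$ is the first projection. A map $g \colon A \to Y$ corresponds to the section $\tilde g \colon A \to Z$, $x \mapsto (x, g(x))$, and this correspondence clearly respects the notions of holomorphic, regular, homotopy through holomorphic maps/sections, and uniform approximation. So the theorem will follow once we verify that $h$ is an amenable regular submersion in the sense of Notation~\ref{not-2-1} and Definition~\ref{def-2-2}.

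First I would check that $h$ is a surjective regular submersion, which is immediate since it is a product projection onto a nonempty factor, and that its vertical bundle satisfies $V(h)_{(x,y)} = \{0\} \times T_yY$ under the canonical identification $T_{(x,y)}Z = T_xX \oplus T_yY$. Next, starting from a dominating cascade $(E_Y = Y \times \CB^n, E_Y^0, s_Y)$ for $Y$ (which exists by the assumed amenability of $Y$, see Definition~\ref{def-2-8}), I would lift it to a dominating cascade for $h$ by setting
\begin{align*}
    & E \coloneqq Z \times \CB^n, \\
    & E^0 \coloneqq \{(x,y,v) \in Z \times \CB^n : (y,v) \in E_Y^0\}, \\
    & s \colon E^0 \to Z, \quad s(x,y,v) \coloneqq (x, s_Y(y,v)).
\end{align*}
Then $E^0$ is a Zariski open neighborhood of the zero section, $s$ is a regular map, $s(x,y,0) = (x,y)$, and $s$ preserves the fibers of $h$, so $(E, E^0, s)$ is a cascade for $h$. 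The dominating property is inherited: at $(x,y,0)$ the derivative $d_{(x,y,0)}s$ sends the fiber $E_{(x,y)}$ into $\{0\} \times T_yY = V(h)_{(x,y)}$, and its image there coincides with $d_{(y,0)}s_Y(E_{Y,y}) = T_yY$.

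Having established that $h$ is amenable, I would translate the hypotheses of Theorem~\ref{th-2-9} into section-theoretic language. The holomorphic map $f \colon K \to Y$ yields a holomorphic section $\tilde f \colon K \to Z$, $x \mapsto (x, f(x))$, of $h$. A homotopy through holomorphic maps $F \colon U \times [0,1] \to Y$ from $f$ to a regular map $g \colon K \to Y$ (extended holomorphically over some neighborhood) yields the continuous map $\tilde F \colon U \times [0,1] \to Z$, $(x,t) \mapsto (x, F(x,t))$, which is a homotopy of holomorphic sections from $\tilde g$ to $\tilde f$; and $\tilde g|_K$ is a regular section of $h$ because $g$ extends regularly over a Zariski open neighborhood of $K$. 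Theorem~\ref{th-2-7} then produces regular sections $\tilde\varphi \colon K \to Z$ of $h$ approximating $\tilde f$; by the definition of a section of the projection, each such $\tilde\varphi$ has the form $x \mapsto (x, \varphi(x))$ for some regular map $\varphi \colon K \to Y$, and the approximation of $\tilde f$ by $\tilde\varphi$ is equivalent to the approximation of $f$ by $\varphi$, completing the proof.

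There is no serious obstacle: once the correct reduction is set up, the substantive analytic work has already been done in Theorem~\ref{th-2-7}. The only point that requires a moment of care is checking that the compact-open neighborhoods of $\tilde f$ in $\C(K, Z)$ correspond exactly to compact-open neighborhoods of $f$ in $\C(K, Y)$ (which is immediate from the product structure of $Z$), so that approximation on the section side yields approximation on the map side.
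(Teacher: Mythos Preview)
Your proposal is correct and follows essentially the same approach as the paper: reduce to Theorem~\ref{th-2-7} by passing to the projection $X\times Y\to X$, lift the dominating cascade of $Y$ to a dominating cascade for this projection, and use the graph correspondence between maps and sections. The paper's argument is the same, only more terse.
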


\begin{proof}
First observe that the canonical projection $\pi \colon X \times Y \to X$ is amenable, that is, admits a dominating cascade. Indeed, by assumption, there is a dominating cascade $(E=Y \times \CB^n, E^0, s)$ for $Y$. We obtain a dominating cascade $(\tilde E, \tilde E^0, \tilde s)$ for $\pi \colon X \times Y \to X$, where
\begin{align*}
    &\tilde E \coloneqq (X \times Y) \times \CB^n,\\
    &\tilde E^0 \coloneqq \{((x,y),v) \in \tilde E : (y,v) \in E^0 \}\\
    &\tilde s \colon \tilde E^0 \to X \times Y, \quad \tilde s ((x,y),v) \coloneqq s(y,v).
\end{align*}
Clearly, $\varphi \colon K \to X \times Y$, $x \mapsto (x,f(x))$ is a holomorphic section of $\pi \colon X \times Y \to X$. By assumption once again, $\varphi$ is homotopic through holomorphic sections to a regular section defined on $K$. Hence, in view of Theorem~\ref{th-2-7}, the section $\varphi$ can be approximated by regular sections defined on $K$. Consequently, $f \colon K \to Y$ can be approximated by regular maps from $K$ into $Y$.
\end{proof}

For the proof of Theorem\ref{th-1-1} we need two additional auxiliary results.

\begin{proposition}\label{prop-2-10}
Every algebraic manifold that is homogeneous for some linear algebraic group is amenable.
\end{proposition}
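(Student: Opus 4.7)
The plan is to construct a dominating cascade $(E, E^0, s)$ for $Y$ directly from the transitive action of $G$. The parameter bundle will be a trivial bundle $E = Y \times \CB^n$, and the map $s$ will have the form $s(y, t) = \phi(t) \cdot y$, where $\phi$ is a suitable regular map into $G$ based at the identity $e \in G$.

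I would first pass to the identity component $G^0$, a connected linear algebraic group. A standard orbit argument (each $G^0$-orbit is open in $Y$ and their collection partitions every connected component of $Y$) shows that $G^0$ acts transitively on each connected component of $Y$. Writing $\mathfrak{g} \coloneqq T_e G^0$ and $\xi \cdot y \in T_y Y$ for the infinitesimal action, it follows that the linear map $\mathfrak{g} \to T_y Y$, $\xi \mapsto \xi \cdot y$, is surjective for every $y \in Y$.

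Next, I would reduce the proposition to the following auxiliary claim: there exist a positive integer $n$, a Zariski open neighborhood $V \subseteq \CB^n$ of the origin, and a regular map $\phi \colon V \to G^0$ with $\phi(0) = e$ such that $d_0 \phi \colon \CB^n \to \mathfrak{g}$ is surjective. Granted $\phi$, I would put $E = Y \times \CB^n$, $E^0 = Y \times V$ (Zariski open in $E$ and containing the zero section $Y \times \{0\}$), and define $s \colon E^0 \to Y$ by $s(y, t) \coloneqq \phi(t) \cdot y$. Regularity of $s$ and the normalization $s(y, 0) = y$ are immediate from the group action axioms and the definition of $\phi$. For the dominating condition, differentiating $t \mapsto \phi(t) \cdot y$ at $t = 0$ yields the composition of the surjection $d_0 \phi \colon \CB^n \to \mathfrak{g}$ with the surjective infinitesimal action $\mathfrak{g} \to T_y Y$, so $d_{(y, 0)} s$ carries $E_y = \CB^n$ onto $T_y Y$, as required by Definition~\ref{def-2-8}\ref{def-2-8-ii}.

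The main obstacle is the production of $\phi$: for a general smooth affine variety such a parametrization need not exist, since there may be no nonconstant regular map from any Zariski open subset of $\CB$ into the variety (consider hyperbolic examples). Here, however, I would invoke the classical theorem that every connected linear algebraic group over $\CB$ is a rational variety. This can be derived from the Levi decomposition $G^0 = L \ltimes R_u(G^0)$ together with the rationality of the reductive Levi factor $L$ (via the Bruhat decomposition) and the biregular isomorphism $R_u(G^0) \cong \CB^m$ supplied by the algebraic exponential. Rationality furnishes a biregular isomorphism between Zariski open subsets $V_0 \subseteq \CB^n$ and $W_0 \subseteq G^0$ with $n = \dim G^0$; after left-translating $W_0$ by a suitable element of $G^0$ to ensure that $e \in W_0$, and then translating in $\CB^n$ so that the preimage of $e$ lies at the origin, we obtain the desired $\phi$, with $d_0 \phi$ in fact a linear isomorphism onto $\mathfrak{g}$.
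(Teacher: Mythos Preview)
Your proposal is correct and follows essentially the same route as the paper: both pass to the identity component $G^0$, invoke rationality of a connected linear algebraic group to produce a regular chart $\phi\colon V\to G^0$ with $\phi(0)=e$ and $d_0\phi$ an isomorphism, and then set $s(y,v)=\phi(v)\cdot y$ on $E^0=Y\times V$. The only cosmetic differences are that the paper cites Chevalley's theorem directly for rationality and normalizes via $\varphi(v)=\psi(v)\psi(0)^{-1}$ rather than translating the open sets, whereas you sketch rationality through the Levi--Bruhat route and make the surjectivity of the infinitesimal action explicit.
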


\begin{proof}
Let $G$ be a linear algebraic group and let $G^0$ be the irreducible component of $G$ that contains the identity element~$1$. Set $n \coloneqq \dim G$. By a result of Chevalley \cite[Corollary~2]{bib8}, $G^0$ is a rational variety, and hence there exist a nonempty Zariski open subset $U$ of $\CB^n$ and a regular map $\psi \colon U \to G^0$ such that the image $\psi(U)$ is a Zariski open subset of $G^0$ and $\psi$ induces a biregular isomorphism from $U$ onto $\psi(U)$. Using a translation in $\CB^n$ we may assume that $U$ contains the origin $0 \in \CB^n$. Then the map $\varphi \colon U \to G$, $v\mapsto \psi(v)\psi(0)^{-1}$ is regular, $\varphi(0)=1$, and the derivative of $\varphi$ at $0$ is a linear isomorphism. Now, let $Y$ be a homogeneous algebraic manifold for $G$. We obtain a dominating cascade $(E,E^0,s)$ for~$Y$, where $E \coloneqq Y \times \CB^n$, $E^0 \coloneqq Y \times U$, and $s \colon E^0 \to Y$ is defined by $s(y,v) \coloneqq \varphi(v)\cdot y$ for all $(y,v) \in E^0$.
\end{proof}

Our last lemma holds in the framework of complex (holomorphic) manifolds.

\begin{lemma}\label{lem-2-11}
Let $X$ be a Stein manifold, $K$ a compact holomorphically convex subset of $X$, and $Y$ a complex manifold that admits a transitive action of a complex Lie group. Let $f_0, f_1 \colon K \to Y$ be two homotopic holomorphic maps. Then $f_0$ and $f_1$ are homotopic through holomorphic maps.
\end{lemma}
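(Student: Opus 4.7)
My plan is to exploit the transitivity of the $G$-action on $Y$ to reparametrize the continuous homotopy $F$ as a continuous family $\Gamma(x,t)\in G$ of group elements transporting $f_0(x)$ to $F(x,t)$, and then to invoke Grauert's Oka principle for maps into the complex Lie group $G$ to replace this family by one that is holomorphic in $x$ for each $t$. The desired homotopy of holomorphic maps will be $\tilde F(x,t)=\tilde\Gamma(x,t)\cdot f_0(x)$, after a small final endpoint correction.

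First I would build a near-diagonal holomorphic gauge on $Y$. Let $\mathfrak{g}$ denote the Lie algebra of $G$. The holomorphic map $s\colon Y\times\mathfrak{g}\to Y$, $s(y,v)=\exp(v)\cdot y$, is a dominating spray, because transitivity of the $G$-action makes the derivative of $s$ in the $v$-direction surjective onto $T_yY$ at $v=0$. Applying the implicit function theorem to $(y,v)\mapsto(y,s(y,v))$ at the zero section, over any prescribed compact region of $Y$, I obtain an open neighborhood $\Omega\subseteq Y\times Y$ of the diagonal together with a holomorphic map $\sigma\colon\Omega\to\mathfrak{g}$ satisfying $\sigma(y,y)=0$ and $\exp(\sigma(y,y'))\cdot y=y'$.

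Then, fixing a Stein open neighborhood $U\subseteq X$ of $K$ on which $f_0$ and $f_1$ both extend holomorphically, I would use uniform continuity of $F$ on the compact set $K\times[0,1]$ to pick a partition $0=t_0<t_1<\cdots<t_k=1$ with $(F(x,t_{i-1}),F(x,t_i))\in\Omega$ for every $x\in K$ and every $i$. Setting $w_i(x):=\sigma(F(x,t_{i-1}),F(x,t_i))\in\mathfrak{g}$ and
\[
\Gamma(x,t):=\exp(\lambda_i(t)\,w_i(x))\,\exp(w_{i-1}(x))\cdots\exp(w_1(x))\qquad\text{for }t\in[t_{i-1},t_i],
\]
with $\lambda_i\colon[t_{i-1},t_i]\to[0,1]$ linear, produces a continuous map $\Gamma\colon K\times[0,1]\to G$ satisfying $\Gamma(x,0)=e$ and $\Gamma(x,1)\cdot f_0(x)=f_1(x)$ for all $x\in K$.

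The hard part will be promoting $\Gamma$ to a family $\tilde\Gamma\colon K\times[0,1]\to G$ that is holomorphic in $x$ for each $t$, with $\tilde\Gamma(\cdot,0)=e$ preserved and $\tilde\Gamma(x,1)\cdot f_0(x)=f_1(x)$ holding exactly. My proposed route is Grauert's Oka--Cartan principle for Lie-group-valued maps over Stein bases: $K$ is a compact holomorphically convex subset of the Stein manifold $X$ (and $K\times[0,1]$ correspondingly of $X\times\CB$), and each block $x\mapsto\exp(w_i(x))$ takes values in a small neighborhood of $e$ in $G$ (by the fineness of the partition), so that block by block, via the exponential chart and the parametric Oka principle with approximation for the complex Lie group $G$, one obtains a holomorphic approximation on a Stein neighborhood of $K$. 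The delicate point is controlling the accumulated uniform error and ensuring approximability of the continuous $\mathfrak{g}$-valued data on the Stein compactum $K$; once this is handled, any residual discrepancy $(\tilde\Gamma(x,1)\cdot f_0(x),f_1(x))$ lies in $\Omega$, and a final short holomorphic deformation via the near-diagonal gauge $\sigma$ at $f_1$ aligns the endpoint exactly. Setting $\tilde F(x,t):=\tilde\Gamma(x,t)\cdot f_0(x)$, followed by this correction, gives the required homotopy through holomorphic maps from $f_0$ to $f_1$.
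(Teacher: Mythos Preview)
Your approach has a genuine gap at the crucial step. You want to replace the continuous $\mathfrak{g}$-valued functions $w_i\colon K\to\mathfrak{g}$ (built from the \emph{continuous} homotopy $F$ on $K$) by holomorphic ones, invoking ``approximability of the continuous $\mathfrak{g}$-valued data on the Stein compactum $K$''. This is not available: continuous functions on a holomorphically convex compact set are \emph{not} uniformly approximable by holomorphic functions (think of $\bar z$ on a closed disc). The Oka--Grauert principle gives a weak homotopy equivalence between spaces of continuous and holomorphic maps \emph{from a Stein manifold}, not $C^0$-density of holomorphic maps in continuous maps on a compact set. Since your $\Gamma$ lives only on $K$ and is merely continuous, neither Oka--Weil nor the parametric Oka principle applies to it as written.

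The paper sidesteps this entirely by a short topological manoeuvre followed by a direct appeal to Grauert's theorem for maps into $Y$ itself. It first extends $f_0,f_1$ holomorphically to an open $V\supseteq K$, then uses a tubular neighbourhood of $Y$ in some $\R^n$ together with Tietze extension to thicken the given continuous homotopy from $K\times[0,1]$ to $L\times[0,1]$ for a compact neighbourhood $L\subseteq V$ of $K$. Passing to a Stein open $U$ with $K\subseteq U\subseteq L$, one now has two holomorphic maps $\tilde f_0|_U,\tilde f_1|_U\colon U\to Y$ that are \emph{continuously homotopic on the Stein manifold $U$}. Grauert's theorem for homogeneous targets then yields a homotopy through holomorphic maps on $U$, and restriction to $K$ finishes the proof. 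No lift to $G$, no partitions, no endpoint correction are needed. If you want to salvage your route, the missing ingredient is exactly this extension of the homotopy to an open Stein neighbourhood before invoking any Oka-type theorem; once you do that, though, you may as well apply Grauert directly to $Y$ rather than detouring through $G$.
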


\begin{proof}
By assumption, there exists a continuous map $A \colon K \times [0,1] \to Y$ with $A_0=f_0$, $A_1=f_1$. The maps $f_0, f_1$ being holomorphic, we can choose an open neighborhood $V \subseteq X$ of $K$ and holomorphic maps $\tilde f_0, \tilde f_1 \colon V \to Y$ satisfying $\tilde f_0|_K=f_0$, $\tilde f_1|_K=f_1$. We claim that there exists a compact neighborhood $L \subseteq V$ of $K$ such that the maps $\tilde f_0|_L$, $\tilde f_1|_L$ are homotopic. The proof of the claim is purely topological. We may assume that $Y$ is a $\Cinfty$ submanifold of $\R^n$ for some $n$. Let $\rho \colon T \to Y$ be a tubular neighborhood of $Y$ in $\R^n$ (that is, $T \subseteq \R^n$ is an open neighborhood of $Y$ and $\rho$ is a $\Cinfty$ retraction). Picking a compact neighborhood $L \subseteq V$ of $K$, we obtain a continuous map
\begin{equation*}
    B \colon (K \times [0,1]) \cup (L \times \{0,1\}) \to \R^n
\end{equation*}
defined by $B|_{K\times[0,1]}=A$ and $B(x,0) = \tilde f_0(x)$, $B(x,1)=\tilde f_1(x)$ for all $x \in L$. Hence, by the Tietze extension theorem, there is a continuous extension $C \colon X \times [0,1] \to \R^n$ of $B$. Shrinking $L$, if necessary, we get $C(L \times [0,1])\subseteq T$. Consequently, $D \colon L \times [0,1] \to Y$, $(x,t) \mapsto \rho(C(x,t))$ is a homotopy between $\tilde f_0|_L$ and $\tilde f_1|_L$, as required.

Since the compact set $K$ is holomorphically convex in $X$, there exists an open Stein subset $U$ of $X$ with $K \subseteq U \subseteq L$. By Grauert's theorem \cite{bib14}, there is a continuous map $F \colon U \times [0,1] \to Y$ such that for every $t \in [0,1]$ the map $F_t \colon U \to Y$ is holomorphic and $F_0 = \tilde f_0|_U$, $F_1 = \tilde f_1|_U$. Hence $f_0$ and $f_1$ are homotopic through holomorphic maps.
\end{proof}

\begin{remark}\label{rem-2-12}
The conclusion of Lemma~\ref{lem-2-11} holds under a weaker assumption on the complex manifold $Y$. Namely, it is sufficient to assume that $Y$ is (holomorphically) subelliptic. In that case, in the final step of the proof, one refers to \cite[Theorem~5.4.4]{bib12} instead of \cite{bib14}.
\end{remark}

\begin{proof}[Proof of Theorem~\ref{th-1-1}]
By Proposition~\ref{prop-2-10}, the algebraic manifold $Y$ is amenable. Hence \ref{th-1-1-b} implies \ref{th-1-1-a} in view of Theorem~\ref{th-2-9} and Lemma~\ref{lem-2-11}. Obviously, \ref{th-1-1-a} implies \ref{th-1-1-b} since any two sufficiently close continuous maps in $\C(K,Y)$ are homotopic.
\end{proof}

\begin{acknowledgements}
The second named author was partially supported by the National Science Center (Poland) under grant number 2018/31/B/ST1/01059.

We thank Olivier Wittenberg for very useful comments.
\end{acknowledgements}

\phantomsection
\addcontentsline{toc}{section}{\refname}

\end{document}